\title{Log-canonical thresholds in real and complex dimension 2}
\author[T.C. Collins]{Tristan C. Collins}
  \email{tcollins@math.harvard.edu}
  \thanks{Work supported in part by NSF grant DMS-1506652}
  \address{Department of Mathematics, Harvard University, 1 Oxford Street, Cambridge, MA 02138}
    \dedicatory{Dedicated to J.-P. Demailly on the occasion of his 60th birthday.}
\theoremstyle{plain}
\newtheorem{thm}{Theorem}[section]
\newtheorem{prop}[thm]{Proposition}
\newtheorem{defn}[thm]{Definition}
\newtheorem{lem}[thm]{Lemma}
\newtheorem{conj}[thm]{Conjecture}
\newtheorem{nota}[thm]{Notation}
\theoremstyle{definition}
\newtheorem{rk}[thm]{Remark}
\numberwithin{equation}{section}
\newcommand{\del}{\partial}
\renewcommand{\leq}{\leqslant}
\renewcommand{\geq}{\geqslant}
\renewcommand{\epsilon}{\varepsilon}
\renewcommand{\phi}{\varphi}
\begin{document}
\maketitle

\begin{abstract}
We study the set of log-canonical thresholds (or critical integrability indices) of holomorphic (resp. real analytic) function germs in $\mathbb{C}^2$ (resp. $\mathbb{R}^2$).  In particular, we prove that the ascending chain condition holds, and that the positive accumulation points of decreasing sequences are precisely the integrability indices of holomorphic (resp. real analytic) functions in dimension $1$.  This gives a new proof of a theorem of Phong-Sturm.
\end{abstract}

\section{Introduction}

%This note is concerned with some fine properties

Let $f$ be a holomorphic or real analytic function defined in a neighbourhood of the origin in $\mathbb{C}^{n}$ or $\mathbb{R}^{n}$.  We define the critical integrability index, or log-canonical threshold, of $f$ at the origin to be
\[
c_0(f):= \sup \left\{ c >0 : \exists \epsilon >0 \text { such that } \int_{B_{\epsilon}(0)} |f|^{-c} < +\infty \right\}
\]

The number $c_0(f)$ is a measure of the order of vanishing of $f$ at the origin; indeed in dimension $1$ it is easy to see that $c_0(f)$ is proportional to the order of vanishing, while in higher dimension $c_0(f)$ is an important and subtle invariant of the set $\{f=0\}$.  In real analysis the critical integrability index plays an prominent role in the asymptotic analysis of oscillatory integral operators \cite{V, PSt, G}, and has recently appeared has an important invariant in statistics (see \cite{L} and the references therein).  In complex differential and algebraic geometry, the critical integrability index (and its generalizations to ideal sheaves) appear in geometric applications including existence of K\"ahler-Einstein metrics on Fano manifolds (where $c_0(f)$ is called the $\alpha$-invariant) \cite{Siu, TY, TY1, T, T1, DK, B}, and in the minimal model program (where $c_0(f)$ is called the log-canonical threshold) \cite{S, S1, Ko, Ko1}.

In their study of complex singularity exponents and applications to K\"ahler-Einstein metrics on Fano orbifolds, Demailly-Koll\'ar \cite{DK} proposed a series of remarkable conjectures regarding the numbers $c_0(f)$, including the following
\begin{conj}\label{conj: ACC}
The set
\[
\mathcal{C}(n) = \{ c_0(f) : f \in \mathcal{O}_{\mathbb{C}^n,0} \} 
\]
satisfies the ascending chain condition: precisely, any ascending sequence eventually stabilizes.
\end{conj}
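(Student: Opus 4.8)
The plan is to prove the $n=2$ case of Conjecture~\ref{conj: ACC}, together with its real-analytic analogue, by reducing — via a single blow-up — to the case of uniformly bounded multiplicity, and then running an induction that mirrors the embedded resolution of the plane curve $\{f=0\}$. I describe the complex case; the real case is parallel, with the constant $2$ below replaced by $1$ (the real Jacobian factor) and blow-ups performed over $\mathbb{R}$, the only genuinely new point being that a real curve germ may carry a component with no real points, which one treats as a complex-conjugate pair of branches. First I would discard the trivialities: $c_0(f)=+\infty$ exactly when $f(0)\neq 0$, so an increasing sequence is either eventually $+\infty$ or consists of germs with $f(0)=0$, for which $c_0(f)\in(0,2]$; using $c_0(g^{k})=\tfrac1k c_0(g)$ one reduces to considering $\operatorname{lct}_0(\operatorname{div} f;\mathbb{C}^2)$ for effective divisors $\operatorname{div} f=\sum_j k_j C_j$ with $C_j$ distinct irreducible. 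Blowing up the origin once, with exceptional $E$, one has $\operatorname{ord}_E\pi^{*}f=\operatorname{mult}_0 f=:d$ and $\operatorname{ord}_E K_{Y/\mathbb{C}^2}=1$; changing variables in the defining integral yields the elementary bound $c_0(f)\le 4/d$. Hence, if $c_0(f_1)<c_0(f_2)<\cdots$ were an infinite strictly increasing sequence, then $c_0(f_i)\ge c_0(f_1)=:\delta>0$ for all $i$, so $\operatorname{mult}_0 f_i\le 4/\delta$, and it suffices to prove: \emph{for every} $D\ge 1$, \emph{the set of log-canonical thresholds $\operatorname{lct}_q\!\big(\Delta;(S,E)\big)$ — over smooth surface germs $S$, reduced simple normal crossings boundaries $E\subset S$, points $q\in S$, and effective divisors $\Delta$ on $S$ with $\operatorname{mult}_q\Delta\le D$ and all coefficients $\le D$ — satisfies the ACC.}

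Next I would set up the resolution recursion. After blowing up the origin, in local coordinates $(u,v)$ with $E=\{u=0\}$ one has $\pi^{*}f=u^{d}\,\tilde f$ with $\tilde f$ the strict transform and $|\operatorname{Jac}\pi|$ of order $1$ along $E$, so that
\[
c_0(f)=\min\!\Big(\tfrac{4}{d},\ \min_{q\in E\cap\{\tilde f=0\}}\nu_q\Big),\qquad
\nu_q:=\sup\{\,c:\ |u|^{\,2-cd}\,|\tilde f|^{-c}\in L^1_{\mathrm{loc}}(q)\,\}.
\]
The number $\nu_q$ is a log-canonical threshold of $\{\tilde f=0\}$ on the log smooth pair $(Y,E)$ — the factor $|u|^{2}$ being exactly the discrepancy of $E$ — and this class of invariants is closed under blow-up and obeys the same recursion. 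The decisive dichotomy at a point $q$ over the origin is: if $\operatorname{mult}_q\tilde f<d$, then $\nu_q$ is a threshold attached to a divisor of strictly smaller multiplicity on a log smooth pair (the boundary $E$ only shifting the numerical constants), and the inductive hypothesis applies; whereas if $\operatorname{mult}_q\tilde f=d$ the multiplicity has \emph{persisted}. Since the multiplicities of a strict transform sum over the exceptional to at most the multiplicity below, a persistent point is \emph{unique} at each stage: the resolution is forced into a single chain of point blow-ups — this is the one-dimensional, Newton--Puiseux part of the picture — and the successive exceptional divisors $E^{(i)}$ of this chain contribute values of the shape $\tfrac{2(i+2)}{(i+1)d}$ (in the simplest configuration), all $>\tfrac2d$ and decreasing to $\tfrac2d$, with a genuinely smaller subproblem (multiplicity $<d$) appearing only once the chain terminates.

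To deduce the ACC, suppose $c_0(f_k)\nearrow c^{*}$ strictly; passing to a subsequence we may take $d:=\operatorname{mult}_0 f_k$ constant, so $c^{*}\le 4/d$. Because the sequence strictly increases, $c_0(f_k)<4/d$ for all large $k$, so the minimum above is attained at some point $q_k$ over the origin with $c_0(f_k)=\nu_{q_k}$. If the one-dimensional chain at $q_k$ has bounded length along the subsequence, its combinatorial type stabilizes and $\nu_{q_k}$ becomes, for large $k$, a threshold of a multiplicity-$(<d)$ problem on a log smooth pair — an ACC set by induction — contradicting strict monotonicity; if the chain length is unbounded, then, since the chain contributions $\tfrac{2(i+2)}{(i+1)d}$ all exceed $\tfrac2d$, either eventually $c_0(f_k)<\tfrac2d$, so the minimum must come from the multiplicity-$(<d)$ tail and induction applies, or $c^{*}=\tfrac2d$ and the same conclusion follows because the chain contributions themselves lie strictly above $c^{*}$. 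In every case one lands in an ACC set and the strictly increasing sequence cannot exist, completing the induction on $D$ and hence the theorem.

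The step I expect to be the real obstacle is the persistent-multiplicity analysis: proving rigorously that a non-dropping multiplicity channels the resolution into a single chain governed by the Newton--Puiseux data of $\tilde f$, that this chain has finite length and — even when concatenated with older boundary components it may eventually meet — contributes only thresholds that either exceed $2/d$ or reduce to a strictly smaller problem, and that the log-pair bookkeeping (discrepancies of the accumulating exceptional divisors, control of coefficients) stays compatible with the induction. This is also the mechanism behind the paper's second assertion: letting the contact order along such a chain tend to infinity exhibits the one-dimensional thresholds $2/m$ (respectively $1/m$ in the real case) as limits of decreasing sequences of two-dimensional thresholds, and the ACC is precisely the complementary impossibility of approaching such a value from below.
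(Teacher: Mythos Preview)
Your route is genuinely different from the paper's. The paper never blows up and never inducts on multiplicity. Instead it proves (Theorem~\ref{thm: NewtDist}) that after an explicit coordinate change of the form $(x,y)\mapsto(x,y-Q(x))$ or $(x-Q(y),y)$ the threshold is exactly twice the Newton distance, and then deduces the ACC by a pure lattice-point argument: for a monotone sequence $c_0(f_n)$ one first pins the common order $N$ (using $c_0(f)\le 4/N$, the one step you share), so every $NP(f_n)$ has the vertex $(0,N)$; the vertices of $\del NP(f_n)$ on or left of the diagonal lie in the finite set $\{(p,q)\in\mathbb{N}^2:p\le q\le N\}$ and can be frozen along a subsequence; and then the \emph{first} vertex strictly right of the diagonal is trapped in a bounded triangle of $\mathbb{Z}^2$ determined by the frozen data and the monotonicity, hence takes only finitely many values. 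Your approach---blow up once, pass to log-smooth surface pairs, and induct on a multiplicity/coefficient bound via the recursion $c_0(f)=\min\bigl(4/d,\ \min_q\nu_q\bigr)$---is essentially Shokurov's original MMP argument in dimension $2$. The paper's method is more elementary, needs no log-pair formalism, and yields the accumulation-point description $\{0\}\cup\mathcal{C}(1)$ in the same breath (the Newton polygon makes the limit $2/q^{*}$ visible); your method sits inside the general ACC machinery and is the one that scales to singular ambients and higher dimension.

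The obstacle you name is the right one, and it is not merely bookkeeping. Two points deserve care. First, once the persistent chain meets an earlier exceptional (a satellite point), the discrepancy of the new exceptional jumps by more than $1$ and the clean formula $\tfrac{2(i+2)}{(i+1)d}$ is no longer valid; you must carry the full boundary $\sum a_jE_j$ through the recursion and show that the relevant valuative contributions still exceed the limit $c^{*}$, or else that the problem genuinely drops to a strictly smaller one. Second, your induction is on a bound $D$ for \emph{both} multiplicity and coefficients, but a single blow-up can raise the coefficient along the new exceptional (to $cd-1$ in your notation), so one has to check that the coefficient set stays in a fixed finite (or at least DCC) set rather than drifting upward---otherwise the induction does not terminate. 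Both issues are handled in the literature, but they are exactly where the content of the two-dimensional proof lives; without them your final dichotomy (bounded chain length versus unbounded) does not yet close.
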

A closely related, and well-known conjecture, called the Ascending Chain Condition (ACC) conjecture, appears in the algebraic geometry literature dating back to Shokurov \cite{S1} (see also \cite{Ko, Ko3}).  Shokurov himself established the $2$-dimensional case of the ACC conjecture using Mori's minimal model program \cite{S}. There has been remarkable recent progress on the ACC conjecture, and it is now established in dimension $3$ by work of Alexeev \cite{A}, and for smooth varieties in arbitrary dimension by de Fernex-Ein-Musta\c{t}\u{a} \cite{dFEM}.  Finally, the ACC was established in full generality by Hacon-McKernan-Xu \cite{HMX}. We refer the reader also to the earlier works \cite{Ko3, dFM} where important partial results were obtained. The proof of the smooth case of the Shokurov's ACC conjecture implies
\begin{thm}[de Fernex-Ein-Musta\c{t}\u{a} \cite{dFEM}, Theorem 1.1]
Conjecture \ref{conj: ACC} holds.
\end{thm}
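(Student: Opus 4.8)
The plan is to reduce this statement, which concerns the analytically defined integrability indices $c_0(f)$, to the \emph{algebraic} Ascending Chain Condition for log-canonical thresholds on smooth varieties proved by de Fernex-Ein-Musta\c{t}\u{a}. The first step is the log-resolution formula for $c_0(f)$. Let $f \in \mathcal{O}_{\mathbb{C}^n,0}$ with $f(0)=0$. By Hironaka's theorem there is a proper bimeromorphic map $\mu \colon Y \to U$ from a smooth complex manifold $Y$ onto a neighbourhood $U$ of $0$ such that the union of the total transform $\mu^{-1}(f^{-1}(0))$ with the exceptional locus of $\mu$ is a simple normal crossings divisor. Writing $\mu^{*}\mathrm{div}(f) = \sum_i b_i E_i$ (sum over the irreducible components of the total transform) and $K_{Y/U} = \sum_i k_i E_i$ (with $k_i = 0$ when $E_i$ is not $\mu$-exceptional), the change of variables $z=\mu(w)$ in $\int_{B_\epsilon(0)} |f|^{-c}$, computed in a chart where the $E_i$ are coordinate hyperplanes, shows that the integral is finite for $c$ slightly below $\min_i \tfrac{2(k_i+1)}{b_i}$ and infinite for $c$ larger, the minimum being over those $i$ with $0 \in \mu(E_i)$. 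Hence
\[
c_0(f) \;=\; \min_i \frac{2(k_i+1)}{b_i} \;=\; 2\,\mathrm{lct}_0\!\bigl(\mathbb{C}^n,\mathrm{div}(f)\bigr)\;\in\;\mathbb{Q}_{>0},
\]
i.e.\ $c_0(f)$ is a fixed positive rational multiple (namely $2$ in the complex setting, $1$ in the real analytic one) of the algebraic log-canonical threshold of the hypersurface $\{f=0\}$ at the origin.

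The second step passes from analytic germs to algebraic ones. I claim that $c_0$ is finitely determined: for each $f$ there is an integer $N=N(f)$ with $c_0(f)=c_0(g)$ whenever $f-g \in \mathfrak{m}^{N}$. Choosing $N$ so large that, for the resolution $\mu$ above, $\mu^{*}\mathfrak{m}^{N} \subseteq \mathcal{O}_Y\!\bigl(-\sum_i (b_i+1)E_i\bigr)$, one finds that $\mathrm{ord}_{E_i}(g)=\mathrm{ord}_{E_i}(f)=b_i$ for every $E_i$ lying over $0$ and that $\mu$ restricted to a neighbourhood of the (compact) fibre $\mu^{-1}(0)$ is a log resolution of $\mathrm{div}(g)$; the formula of Step~1 then gives $c_0(g)=c_0(f)$. (Equivalently, this follows from upper semicontinuity of log-canonical thresholds in families; note that a bound on $N$ independent of $f$ genuinely fails, as $f=x^2+y^m$ shows.) Truncating the Taylor expansion of $f$ at order $N$ yields a polynomial with the same $c_0$, so
\[
\mathcal{C}(n) \;=\; 2\cdot\bigl\{\, \mathrm{lct}_0\!\bigl(\mathbb{C}^n,\mathrm{div}(P)\bigr) \;:\; P \in \mathbb{C}[x_1,\dots,x_n],\ P(0)=0 \,\bigr\}\;\subseteq\; 2\cdot \mathrm{LCT}_n,
\]
where $\mathrm{LCT}_n$ denotes the set of all log-canonical thresholds of nonzero ideals at points of smooth $n$-dimensional varieties.

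The final step is to invoke Theorem~1.1 of de Fernex-Ein-Musta\c{t}\u{a}: the set $\mathrm{LCT}_n$ satisfies the ascending chain condition. Multiplication by the fixed constant $2$ preserves this property, and so does passage to the subset $\mathcal{C}(n)\subseteq 2\cdot\mathrm{LCT}_n$, since a set with no infinite strictly ascending sequence cannot acquire one by scaling or by restriction. Therefore $\mathcal{C}(n)$ satisfies the ascending chain condition, which is precisely the content of Conjecture~\ref{conj: ACC}.

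I expect essentially all of the difficulty to be concentrated in the cited theorem of de Fernex-Ein-Musta\c{t}\u{a}, whose proof rests on the theory of generic limits of ideals together with a delicate induction on the dimension; the remainder is a routine translation between the transcendental and algebraic pictures, the only point requiring care being that the truncation order $N$ in Step~2 must be chosen germ by germ rather than uniformly.
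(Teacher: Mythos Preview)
The paper does not supply its own proof of this theorem: it is stated purely as a citation, with the preceding sentence ``The proof of the smooth case of the Shokurov's ACC conjecture implies'' indicating that the author regards Conjecture~\ref{conj: ACC} as an immediate corollary of \cite{dFEM}, Theorem~1.1, without spelling out the passage from the analytic to the algebraic setting. Your proposal fills in exactly that passage---the log-resolution formula identifying $c_0(f)$ with (twice) the algebraic log-canonical threshold, and finite determinacy to reduce to polynomial germs---and this is the standard and correct route; so your approach is in complete agreement with what the paper intends, only more explicit.

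One small remark: your Step~2 is not strictly necessary, since the theorem of \cite{dFEM} is already formulated for arbitrary ideals (or formal power series, via their generic-limit machinery) and the identity $c_0(f)=2\,\mathrm{lct}_0(f)$ from Step~1 alone embeds $\mathcal{C}(n)$ into a rescaling of their set $\mathcal{T}_n$. The finite-determinacy argument is correct but can be bypassed.
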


In complex dimension $2$ there have been several contributions to Conjecture~\ref{conj: ACC}. Igusa \cite{I} computed the log-canoncial thresholds of irreducible plane curves with singularities at the origin.  This was subsequently improved by Kuwata \cite{K} who computed all of the critical integrability indices in dimension $2$ using algebraic techniques. Phong-Sturm, using analytic techniques they developed in \cite{PS}, described all the critical integrability indices in dimension $2$ \cite{PS1}, giving an analytic proof of Conjecture~\ref{conj: ACC}, and characterizing all the accumulation points of $\mathcal{C}(2)$.  Furthermore, their results hold more generally in the setting of real analytic functions in $\mathbb{R}^2$ \cite[Remark 1]{PS1}.   Favre-Jonsson proved Conjecture~\ref{conj: ACC} in complex dimension $2$ \cite{FJ} as a by product of their robust algebraic techniques based on valuations.  More recently, Hai-Hiep-Hung \cite{HHH} gave another analytic proof of Conjecture \ref{conj: ACC} in dimension $2$.  Summarizing;

\begin{thm}\label{thm: main}
Let $\mathcal{C}(2)$ denote the set of integrability indices for holomorphic (resp. real analytic) germs defined near the origin in $\mathbb{C}^2$ (resp. $\mathbb{R}^2$).  Then $\mathcal{C}(2)$ satisfies the ascending chain condition.  Furthermore, the accumulation points of $\mathcal{C}(2)$ are precisely $\{0\}\cup \mathcal{C}(1)$.
\end{thm}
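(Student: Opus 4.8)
The plan is to reduce everything to a one-dimensional computation via embedded resolution of singularities for plane curves, which in dimension $2$ is concrete and controllable through sequences of point blow-ups. Given a germ $f$ at the origin, I would first pass to a log resolution $\pi\colon X\to (\C^2,0)$ obtained by successively blowing up points, producing exceptional divisors $E_1,\dots,E_N$ with $\pi^*\{f=0\} = \sum a_i E_i + \widetilde{\{f=0\}}$ and $K_X = \pi^*K_{\C^2} + \sum (k_i-1)E_i$; then $c_0(f) = \min_i \frac{k_i}{a_i}$, where the minimum is also taken over the components of the strict transform (with $k_i=1$ there). The content is to understand how these numerators and denominators evolve under one blow-up: if a point $p\in E_i$ (lying on $c$ components counting multiplicities with coefficients $a_{j}$, plus possibly branches of the strict transform with coefficient $m=\mathrm{mult}_p$) is blown up with new divisor $E_0$, then $a_0 = m + \sum_{j} a_j$ and $k_0 = 2 + \sum_j (k_j - 1)$ where the sum ranges over components through $p$. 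This gives the recursive structure on which all the estimates rest.

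The ACC part I would prove by contradiction: suppose $c_0(f_\ell)$ is a strictly increasing sequence bounded above (necessarily by $1$, since $c_0(f)\le 1$ whenever $f(0)=0$ in dimension $2$ — if $c_0(f)>1$ then $\{f=0\}$ is smooth at $0$ and $c_0(f)=1$). Writing each $c_0(f_\ell) = k_\ell/a_\ell$ as a ratio coming from some exceptional divisor in its resolution, I would exploit the boundedness and the recursive formulas above to bound the \emph{combinatorial complexity} of the relevant portion of the resolution — essentially, the number of blow-ups needed and the multiplicities involved — showing that only finitely many rational numbers can arise, contradicting strict monotonicity. The key quantitative input is a lower bound of the form: if the divisor computing $c_0(f)$ appears at "depth" $d$ in the blow-up tree, then $1 - c_0(f)$ is bounded below by a function of $d$ that does not tend to $0$ too fast; equivalently, thresholds close to $1$ force shallow, low-multiplicity resolutions, of which there are finitely many with value in any interval bounded away from the accumulation set.

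For the accumulation points, the inclusion $\mathcal{C}(1)\cup\{0\}\subseteq$ (accumulation points) is the easy direction: given $c\in\mathcal{C}(1)$, realized by a one-variable germ $g(x)$ with $c_0(g)=c=1/(\mathrm{ord}\,g)$, I would write down an explicit family like $f_\ell(x,y) = g(x) + y^\ell$ (or a suitable variant adapted to real-analytic branches) and compute via Newton polygons that $c_0(f_\ell) = c_0(g) + \frac{1}{\ell}\cdot(\text{something})\to c$ from above, while these values are distinct, so $c$ is an accumulation point; and scaling the exponents lets $0$ be an accumulation point too. The reverse inclusion — that \emph{every} positive accumulation point lies in $\mathcal{C}(1)$ — is the heart of the matter and where I expect the main obstacle: given $c_0(f_\ell)\to c^* >0$ with the values distinct, I must extract, from the growing resolution trees, a "limiting" configuration that degenerates along a single curvette direction, producing a one-variable germ whose integrability index is exactly $c^*$. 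Concretely, one tracks the divisor $E^{(\ell)}$ computing $c_0(f_\ell)$; since $c^*<1$ forces unbounded depth (by the ACC estimate), there must be a first blow-up where the ratio $k/a$ "stabilizes near $c^*$", and restricting $f_\ell$ to a generic curvette through the corresponding point, after a change of coordinates straightening that curvette, yields a one-variable germ $g_\ell$ with $c_0(g_\ell)\to c^*$; since $\mathcal{C}(1) = \{1/m : m\in\mathbb{Z}_{>0}\}$ (or its real analogue) is discrete away from $0$, the $g_\ell$ must eventually have constant order, pinning $c^* = c_0(g_\ell)\in\mathcal{C}(1)$. Making the phrase "stabilizes near $c^*$" precise, and controlling the change of coordinates uniformly in $\ell$, is the delicate step; in the real-analytic case one must additionally be careful that restriction to a real curvette does not artificially raise the integrability index, which is handled by choosing the curvette generically among real directions and invoking the Phong–Sturm type estimate that the index is computed by such a restriction up to a controlled error.
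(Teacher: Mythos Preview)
Your approach via log resolution and the formula $c_0(f) = \min_i k_i/a_i$ is genuinely different from the paper's, which proceeds entirely through Newton polygons. The paper first establishes (its Theorem~\ref{thm: NewtDist}) that after a change of variables of the form $(x,y)\mapsto (x, y-Q(x))$ the Newton distance computes $c_0(f)$; then, for a sequence $f_n$ of fixed order $N$, it observes that every vertex of $\partial NP(f_n)$ lying on or left of the diagonal belongs to the finite lattice set $\{(p,q)\in\mathbb{N}^2: q\ge p,\ q\le N\}$, so after passing to a subsequence the left half of each Newton polygon is the same. The Newton distance is then determined by the single vertex $(p_n,q_n)$ immediately right of the diagonal, and for an increasing sequence (resp.\ a decreasing sequence with limit strictly larger than $2/q^*$) this vertex is trapped in a bounded region, hence a finite set of lattice points, forcing stabilization; in the remaining decreasing case the limit is $2/q^*\in\mathcal{C}(1)$, visibly the index of $y^{q^*}$. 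Your blow-up route is the classical birational one (Shokurov, Koll\'ar) and can be pushed through, but what you have written has a real gap.

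The specific quantitative claim you isolate---that $1-c_0(f)$ is bounded below by a function of the depth $d$ of the computing divisor---is aimed at the wrong target and, as stated, cannot do the job. Accumulation in $\mathcal{C}(2)$ happens at \emph{every} point of $\mathcal{C}(1)$, not only near the top, so an estimate on $1-c_0(f)$ alone says nothing about a strictly increasing sequence accumulating at, say, an interior point of $\mathcal{C}(1)$; the last clause of your sentence (``in any interval bounded away from the accumulation set'') is exactly what needs proving, and the depth bound you propose does not deliver it. What actually controls the situation in the resolution picture is not depth per se but the pair $(k_i,a_i)$ modulo the recursion you wrote, together with a bound on multiplicities coming from fixing ${\rm ord}_0 f$---and making that precise is essentially rediscovering the Newton-polygon finiteness the paper uses. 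For the reverse inclusion on accumulation points you yourself flag the difficulty: the step where restriction to a curvette through the ``stabilizing'' center produces $g_\ell$ with $c_0(g_\ell)\to c^*$ is the entire content, and nothing in the sketch explains why the curvette index tracks $c_0(f_\ell)$ rather than over- or undershooting. The paper avoids this by reading the limit directly off the fixed vertex $(p^*,q^*)$: either the sequence stabilizes, or the limit equals $2/q^*$.
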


In this generality, the Theorem~\ref{thm: main} is due to Phong-Sturm \cite[Theorem A, Remark 1]{PS1}.  The goal of this note is to give a new ``elementary" proof of Theorem~\ref{thm: main}.  The main idea in our approach is to use the connection between integrability indices and convex bodies.  Recall that the Newton polyhedron of $f$ is the convex polyhedron $NP(f) \subset \mathbb{R}^{2}_{\geq 0 }$ constructed as the convex hull of the set $(p,q)+ \mathbb{R}^{2}_{\geq0}$, where $x^py^q$ appears in the Taylor series of $f$ with non-zero coefficient (see Section~\ref{sec: main} for more details).  The Newton polyhedron is not coordinate invariant, but in dimension two one can find a holomorphic (resp. real analytic) change of coordinates so that $NP(f)$ computes $c_0(f)$.

\begin{thm}\label{thm: NewtDist}
In $\mathbb{C}^2$ (resp. $\mathbb{R}^2$), there exists a holomorphic (resp. real analytic)  change of coordinates of the form
\[
(\tilde{x}, \tilde{y}) = (x-Q(y), y) \quad \text { or } \quad (\tilde{x}, \tilde{y}) = (x, y-Q(x))
\]
so that
\[
c_0(f) = 2\delta_{NP} \qquad (\text{resp. } \delta_{NP} )
\]
where $(\delta_{NP}^{-1},\delta_{NP}^{-1}) \in \del NP(f)$ is the Newton distance of $f$ in the coordinates $(\tilde{x}, \tilde{y})$.
\end{thm}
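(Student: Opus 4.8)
The plan is to derive Theorem~\ref{thm: NewtDist} from two inequalities: a coordinate-free upper bound $c_0(f)\le 2\delta_{NP}(f)$ (resp.\ $\le\delta_{NP}(f)$), valid in every coordinate system, together with the construction of coordinates of the stated shear type in which the reverse inequality holds. The upper bound shows that under admissible shears $2\delta_{NP}$ remains $\ge c_0(f)$, so all the content is in exhibiting one shear that attains equality; once such coordinates are found, the theorem follows.

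\emph{Upper bound.} Let $(d_0,d_0):=(\delta_{NP}^{-1},\delta_{NP}^{-1})\in\partial NP(f)$, let $(a,b)\in\mathbb{Z}_{>0}^2$ be a primitive inner conormal to a supporting line of $NP(f)$ through this point, and let $f_\tau$ be the corresponding face polynomial, which is $(a,b)$-quasihomogeneous of weighted degree $w=(a+b)d_0$. Substituting $(x,y)=(s^{a}u,s^{b}v)$ into $\int_{B_\epsilon}|f|^{-c}$ and letting $s\to 0$ (the Jacobian factor is $|s|^{2(a+b)}$ in the complex case and $|s|^{a+b}$ in the real case) forces $c_0(f)\le c_0(f_\tau)$; since $f_\tau$ splits over $\C$ into monomials and binomials $v^{a}-\lambda u^{b}$, a direct computation using the one-variable formula $c_0(z^{m})=2/m$ (resp.\ $1/m$) gives $c_0(f_\tau)\le 2/d_0=2\delta_{NP}(f)$ (resp.\ $\delta_{NP}(f)$).

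\emph{Construction of adapted coordinates.} After a generic linear change of coordinates, write $f$ as a unit times a Weierstrass polynomial in $y$, so that its roots have Newton--Puiseux expansions $y=\gamma_j(x)$; in the real-analytic case, complexify first and observe that the data below is invariant under complex conjugation, so that the eventual shear can be taken with real coefficients. The dichotomy driving the construction is the following: either the coordinates are already adapted, in which case the equality case of the previous step gives $c_0(f)=2\delta_{NP}(f)$; or the diagonal exits $NP(f)$ at a point whose face polynomial exhibits a \emph{cluster}, i.e.\ a common integer-exponent polynomial approximation $Q(x)$ shared by two or more branches (this subsumes the case of a single branch that is a graph over one of the axes and tangent to that axis to integral order $\ge 2$), and then the shear $(\ti{x},\ti{y})=(x,\,y-Q(x))$ strips this cluster. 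One checks that in the new coordinates $c_0(f)$ is unchanged, being an analytic coordinate change, while the Newton polyhedron ``descends'' in a suitable sense (the multiplicity of $f$, or the lattice height of $NP(f)$, strictly drops), so that one may induct. Since shears of a fixed type compose additively, $y-Q_1(x)-Q_2(x)$, and since one may always strip the cluster in whichever of the two variables the dominant branch is a graph, the procedure never needs to alternate between the two shear types, and the composite is again of the required single-shear form.

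\emph{Main obstacle.} The crux is this inductive step: one must show that the single, carefully chosen polynomial $Q$ — an appropriate integer-exponent truncation of the dominant Puiseux root — both leaves $c_0(f)$ invariant (immediate) and forces the diagonal to meet the \emph{new} Newton polyhedron at the point predicted by $c_0(f)$; that is, that stripping the cluster removes exactly the overestimate in $2\delta_{NP}$ and introduces no new one. This demands a combinatorial description of how $NP(f)$ transforms under the shear — it becomes, essentially, the Newton polygon of the curve after translating the dominant branch, expressible through the Puiseux exponents and the pairwise contact orders of the branches — together with an organization of the induction that guarantees both termination and the single-shear conclusion. By comparison, the quasihomogeneous estimate $c_0(f_\tau)\le 2\delta_{NP}(f)$ and the bookkeeping of the normalizing constants distinguishing the real and complex cases are routine.
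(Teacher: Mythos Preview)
Your outline shares the overall architecture with the paper's proof: bound $c_0(f)\le 2\delta_{NP}$ in every coordinate system, identify a numerical obstruction to equality (a large cluster of Puiseux roots), and remove it with a shear.  Your upper bound via degeneration to the face polynomial and semicontinuity is different from the paper's, which obtains the same inequality by direct integration over the ``hollow'' regions $\{D|x|^{a_{i+1}}<|y|<\epsilon|x|^{a_i}\}$ separating roots of distinct leading order; your route is shorter but imports the Demailly--Koll\'ar semicontinuity theorem, whereas the paper's estimates are self-contained.

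The genuine gap is the lower bound.  You write that when coordinates are adapted ``the equality case of the previous step gives $c_0(f)=2\delta_{NP}(f)$'', but your previous step produces only the chain $c_0(f)\le c_0(f_\tau)\le 2\delta_{NP}$.  Even when adaptedness forces $c_0(f_\tau)=2\delta_{NP}$, nothing in your argument gives $c_0(f)\ge c_0(f_\tau)$: semicontinuity points the wrong way.  Establishing $c_0(f)\ge 2\delta_{NP}$ in adapted coordinates is precisely the technical heart of the paper's proof.  It is done by decomposing a neighbourhood of the origin into \emph{solid} and \emph{hollow horns} about clusters of Puiseux roots organised by order of contact, proving sharp lower bounds on $|f|$ in each horn, and integrating (Proposition~\ref{prop: clusterint}, an induction on the contact level).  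The adaptedness hypothesis is formulated as $m_{i\ell}\le\delta_{NP}^{-1}$ for every cluster $S_{i,\ell}$ of roots sharing a common leading term, and this is exactly the condition under which the horn estimates combine to give convergence for all $c<\delta_{NP}$ (Proposition~\ref{prop: almostNewtDist}).  Your proposal contains no substitute for this step.

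A secondary problem is your descent invariant: the multiplicity ${\rm ord}_0 f$ is invariant under analytic coordinate changes, so it cannot ``strictly drop'' after a shear, and the Weierstrass degree in $y$ is likewise preserved by $y\mapsto y-Q(x)$.  The paper sidesteps termination entirely by choosing $Q(x)$ in one shot as the power series of \emph{maximal} length shared by more than $\delta_{NP}^{-1}$ of the roots (Proposition~\ref{prop: COV}); after that single shear one checks directly that $\tilde\delta_{NP}<\delta_{NP}$ and that the cluster condition $m_{i\ell}\le\tilde\delta_{NP}^{-1}$ now holds, so Proposition~\ref{prop: almostNewtDist} applies and no further iteration is needed.
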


Assuming Theorem~\ref{thm: NewtDist}, our proof of Theorem~\ref{thm: main} is based on the observation that the Newton polyhedron is characterized by ``discrete" data (namely the integral vertices), and so one should expect some rigidity for the possible values obtained by intersecting with the diagonal.  By Theorem~\ref{thm: NewtDist} this implies rigidity for integrability indices.

Theorem~\ref{thm: NewtDist} was proved by Var\v{c}henko \cite{V}.  The description of the coordinate change in the real setting is implicit in \cite[Lemma 3.6]{V}, and these techniques generalize to the complex case.  In the real case, Theorem~\ref{thm: NewtDist} appears explicitly in the work of Phong-Stein-Sturm \cite{PSS} as an application of their techniques for studying stability of certain oscillatory integrals.  In the complex case, a version of Theorem~\ref{thm: NewtDist} is proved in \cite{ABCLM}, omitting only the description of the change of variables.  The techniques of \cite{ABCLM} are algebraic, making use of the local topological zeta function of Denef-Loeser \cite{DL}.  We remark that the explicit description of the coordinate change will be important for our argument. In $\mathbb{R}^{n}, n \geq 3$, an analogue of Theorem~\ref{thm: NewtDist} was proved by the author with Greenleaf and Pramanik \cite{CGP} though crucially, one must allow coordinate changes involving fractional power series.  Since the result we need is not explicitly stated in the literature we shall give a self-contained proof of Theorem~\ref{thm: NewtDist} in the spirit of the methods developed by Phong-Stein-Sturm \cite{PSS}, and later work of the author with Greenleaf and Pramanik \cite{CGP}.  These techniques are based on an analytic resolution of singularities algorithm (building on work of Bierstone-Milman \cite{BM, BM1} and Parusi\'nski \cite{P1,P2}) and sharp estimates.  Furthermore, we view this as an opportunity to illustrate the techniques of \cite{CGP} in an example.
\bigskip

{\bf Acknowledgements}:  The author is grateful to Alexander Var\v{c}henko for some helpful comments on an earlier draft.

\section{Background and Proofs}\label{sec: main}

Suppose $f(x,y)$ is a holomorphic (resp. real analytic) function defined in a neighbourhood of $0 \in\mathbb{C}^2$ (resp. $\mathbb{R}^2$) and with $f(0,0)=0$.  We expand $f$ as a power series
\begin{equation}\label{eq: Tseries}
f(x,y) = \sum_{(p,q) \in \mathbb{N}^{2}} a_{p,q}x^py^q.
\end{equation}
\begin{defn}
The Newton polyhedron of $f$, denoted $NP(f)$ is the convex set defined as
\[
NP(f) = {\rm Convex Hull}(\left\{ (p,q) +\mathbb{R}^2_{\geq 0} :  a_{p,q} \ne 0 \right\}).
\]
\end{defn}

%\begin{rk}\label{rk: NPinvar}
%It is important to note that the Newton polyhedron is {\em not} independent of the choice of coordinates.  For example, the polynomial $f(x,y)= (y-x)^{N}$ has Newton polyhedron $\{p+q \geq N\}$, while $g(x,y) = y^{N}$ has Newton polyhedron $\{q\geq N\}$. But clearly $f = g(x,y-x)$.
%\end{rk}

In words, for every monomial $x^{p}y^{q}$ appearing in the Taylor series of $f$, one attaches a copy of the positive orthant at $(p,q)$, and then takes the convex hull.  The result is a non-compact convex body with polyhedral boundary.

\begin{rk}\label{rk: NPinvar}
It is important to note that the Newton polyhedron is {\em not} independent of the choice of coordinates.  For example, the polynomial $f(x,y)= (y-x)^{N}$ has Newton polyhedron $\{p+q \geq N\}$, while $g(x,y) = y^{N}$ has Newton polyhedron $\{q\geq N\}$. But clearly $f = g(x,y-x)$.
\end{rk}
\begin{defn}
The Newton distance, denoted $\delta_{NP}$ is defined by 
\[
\delta_{NP}^{-1}(f) = \inf \left\{t\in \mathbb{R}_{\geq 0}: (t,t) \in NP(f) \right\}
\]
\end{defn}
That is $(\delta_{NP}^{-1}, \delta_{NP}^{-1})$ is the point where the diagonal line $\{p=q\}$ intersects $\del NP(f)$.  The Newton polyhedron plays a fundamental role in the study of the zeroes of $f$; we will expand upon some of these connections, but refer the reader to \cite{Ko2} for a more thorough discussion.  

We define the order of $f$ at $0$ to be $N$, denoted ${\rm ord}_0f=N$, if $N$ is the minimal homogeneous degree of a non-zero monomial appearing in \eqref{eq: Tseries}.  Equivalently, $N$ is the order of vanishing of $f$ restricted to a generic line in $\mathbb{C}^2$ or $\mathbb{R}^2$.  Therefore, up to making a linear change of coordinates, we can assume that
\[
f(0,y) = cy^{N} + O(y^{N+1}), \qquad c\ne 0.
\]
By the Weierstrass preparation theorem we can write
\[
f(x,y) = (unit) \cdot P(x,y)
\]
where $unit$ denotes a non-vanishing holomorphic (resp. real analytic) function, and $P(x,y)$ is a Weierstrass polynomial of degree $N$ in $y$;
\[
P(x,y) = y^{N} + \sum_{\ell=0}^{N-1}b_{\ell}(x)y^{\ell}
\]
with $b_{\ell}(x)$ holomorphic (resp. real analytic) functions vanishing at $x=0$.
\begin{rk}\label{rk: genericCoords}
We point out that, for a generic choice of coordinates we can also arrange that $f(x,0) = c'x^{N} + O(x^{N+1})$.  In particular, we can ensure that $f(x,y)$ can be written simultaneously as a Weierstrass polynomial of order $N$ in $x$ and a Weierstrass polynomial of order $N$ in $y$ (up to multiplication by a unit).  This will be a convenient choice of coordinates to make in Section~\ref{sec:mainthm}.
\end{rk}

For completeness, and since we will later have to allow changes of coordinates, we will compute the critical integrability index of a general holomorphic function $f$.  Up to discarding a non-vanishing holomorphic function, we can take $f$ to be of the form
\[
f(x,y) := x^{\alpha}y^{\beta}P(x,y)
\]
where $P(x,y)$ is a Weierstrass polynomial in $y$ of degree $N$ with $P(x,0)\not\equiv0$.  It is a classical fact that after possibly shrinking the neighbourhood of the origin we can factor
\begin{equation}\label{eq: prodPuis}
P(x,y) = \prod_{\nu=1}^{N}(y-\phi_\nu(x))
\end{equation}
where the $\phi_\nu$ are convergent Puiseux series solutions of the form
\[
\phi_\nu(x) = b_\nu x^{a_\nu} + O(x^{A_\nu})
\]
with $b_\nu \in \mathbb{C}^{*}$ and $a_\nu \in \mathbb{Q}_{>0}$, $A_\nu>a_\nu$; we refer the reader to \cite[Section 3]{PSt} and the references therein for a complete discussion of the convergence properties of Puiseux series.  Let 
\[
0<a_1< \cdots < a_k
\]
be the distinct leading exponents of the $\phi_{\nu}$, and let $F_1,\ldots, F_k$ be the non-vertical/horizontal faces of the Newton polyhedron of $f$, numbered from left to right. The leading exponents are determined by
\[
\frac{1}{a_i} = -\text{ the slope of } F_i.
\]
Furthermore, the number of solutions $\phi_{\nu}$ with a given leading order $a_i$ is nothing but the length of the line obtained by projecting $F_i$ onto the $q$-axis.  This discussion is somewhat backwards, since the Puiseux series solutions are in general constructed from the Newton polyhedron inductively \cite{Ko2}.

Let us introduce some notation.  Set
\[
S_{i} = \{ \phi_\nu(x) : \phi_\nu(x) = bx^{a_i} + \cdots \text{ for some } b\ne 0\}, \qquad m_i := \#S_i
\]
to be the collection of Puiseux series roots with leading order $a_i$.  For $0\leq i \leq k$, let
\begin{equation}\label{eq:ABd def}
A_i = \sum_{j\leq i} m_ja_j + \alpha, \qquad B_i = \sum_{j>i}m_j + \beta, \qquad \delta _i^{-1} := \frac{A_i+a_iB_i}{1+a_i}.
\end{equation}
A short computation shows that $(A_i, B_i)$ are the vertices of the Newton polyhedron, and $(\delta_i^{-1}, \delta_i^{-1})$ is the point of intersection of the diagonal with the prolongation of the face $F_i$.  We define the {\em main face} of $\del NP(f)$ to be the face or vertex of $\del NP(f)$ which meets the diagonal; in particular, the main face may not be a face at all.

For each pair of roots $\phi_{\mu}, \phi_{\nu}$ we can write
\[
\phi_{\mu} = b_{\mu}x^{a_{\mu}} + \cdots, \qquad \phi_{\mu}- \phi_{\nu} = b_{\mu\nu}x^{r_{\mu\nu}}+ \cdots,
\]
with $b_{\mu},b_{\mu\nu} \ne 0$, and we define the {\em order of contact} between $\phi_{\mu}$ and $\phi_{\nu}$ to be $r_{\mu\nu}$.  We define $r_{\mu\nu}=+\infty$ if $\phi_{\mu} - \phi_{\nu} \equiv 0$. In what follows it will be convenient to fix two constants $\epsilon, D$, which we define as
\[
\epsilon = \frac{3}{4}\min_{\mu}\min_{\nu: \phi_{\mu} \not\equiv \phi_{\nu}} \{ |b_{\mu}|, |b_{\mu\nu}|\}, \qquad  D = 2\max_{\mu, \nu} \{ |b_{\mu}|, |b_{\mu\nu}|\}.
\]

\begin{rk}\label{rk: constants}
The precise values of $\epsilon, D$ will not matter.  What does matter is an upper bound for $\epsilon$ and a lower bound for $D$, which we have fixed with the above definition.  That is, if $0< \tilde{\epsilon} <\epsilon < D < \tilde{D} < +\infty$, then our arguments work just as well with $\tilde{\epsilon}, \tilde{D}$.
\end{rk}

Roots with order of contact larger than their leading order play an important role in the estimates to follow, and so we will introduce the sets
\[
S_{i,\ell} = \{ \phi_\nu(x) : \phi_\nu(x) = b_{i\ell}x^{a_i} + \cdots \},  \qquad m_{i\ell} := \#S_{i,\ell}.
\]
Let
\begin{equation}\label{eq: ordersOfContact}
a_i<r_1 < r_2 < \cdots <r_{m-1} < r_{m} = +\infty
\end{equation}
be the list of contact orders between $\phi_{\mu}, \phi_{\nu} \in S_{i,\ell}$ (so that $r_1 >a_i$); note that we do not impose that $\phi_{\mu}, \phi_{\nu}$ are distinct.

\section{Proof of Theorem~\ref{thm: NewtDist}}\label{sec:NewtDist}

We will give the proof of Theorem~\ref{thm: NewtDist} in the complex case, though the reader can check that the argument works just as well in the real case (see \cite{PSS, CGP}).  Our goal in this section is to estimate the integral
\begin{equation}\label{eq: mainInt}
\int_{U} |f|^{-2c}
\end{equation}
where $U \ni 0$ is an open set that we can shrink as we please.  Note we have inserted a factor of $2$ for convenience (this is customary in the complex setting).  We first establish that $c_0(f) \leq 2\delta_{NP}$, by estimating the integral in a region which is ``far" from the roots of $P$.  

\begin{nota}
We will use the symbol $\preceq$ to denote ``less than or equal, up to multiplication by a positive constant", so $a \preceq b$ means $a \leq C b$ for some $C>0$.
\end{nota}

Fix $0< \eta \ll 1$ and for $1 \leq i \leq k-1$, consider the regions
\begin{equation}\label{eq: hollowReg}
\begin{aligned}
R_{0,1} &= \{ D|x|^{a_1} < |y| < \epsilon\}\times \{0 \leq |x| \leq \eta\}\\
R_{i, i+1} &= \{D|x|^{a_{i+1}}< |y| < \epsilon |x|^{a_i} \} \times \{0\leq |x| \leq \eta\} \qquad  \\
R_{k, \infty} &= \{ 0 < |y| < \epsilon|x|^{a_k} \} \times \{0 \leq |x| \leq \eta\}
\end{aligned}
\end{equation}
We will explain how to estimate the integral over $R_{i, i+1}$ in detail, but let us first state the results for $R_{0,1}$ and $R_{k,\infty}$ first.  The integral over $R_{0,1}$ is finite if and only if
\[
c\alpha<1\quad \text{ and } c<\frac{1+a_1}{(A_1+ a_1B_1)}
\]
while the integral over $R_{k,\infty}$ is finite if and only if
\[
c\beta<1 \quad \text{ and } c<\frac{1+a_k}{(A_k+ a_kB_k)}.
\]
We impose these conditions from now on.  To estimate the integral on $R_{i,i+1}$ it suffices to sharply estimate the quantity $|y-\phi_\nu(x)|$.  There are two cases.  First, if $\phi_\nu \in S_j$ for $j > i$ then we get the estimate
\begin{equation*}
\begin{aligned}
|y-\phi_\nu(x)| &\leq |y|+|\phi_\nu(x)| \leq \epsilon |x|^{a_i}  +\frac{D}{2} |x|^{a_j} \leq 2 \epsilon |x|^{a_i}\\
|y-\phi_\nu(x)| &\geq |y|-|\phi_\nu(x)| \geq |y| -\frac{D}{2} |x|^{a_j} \geq \frac{1}{2} |y|.
\end{aligned}
\end{equation*}
On the other hand, if $\phi_\nu \in S_{j}$ for $j\leq i$, then definition of $\epsilon$ gives
\begin{equation*}
\begin{aligned}
|y-\phi_\nu(x)| &\leq |y|+|\phi_\nu(x)| \leq \epsilon |x|^{a_i}  +\frac{D}{2} |x|^{a_j} \leq D |x|^{a_{j}}\\
|y-\phi_\nu(x)| &\geq |\phi_\nu(x)|-|y| \geq\frac{4\epsilon}{3} |x|^{a_j} - \epsilon |x|^{a_i} \geq \frac{1}{3} \epsilon |x|^{a_j}.
\end{aligned}
\end{equation*}
Thus, in $R_{i, i+1}$ we have
\begin{equation}\label{eq: est1Hol}
|x|^{A_i}|y|^{B_i} \preceq |f| \preceq |y|^{\beta}|x|^{A_i + a_i(B_i-\beta)}.
\end{equation}
We estimate, using $1>c\beta$
\[
\begin{aligned}
\int_{R_{i,i+1}} |f|^{-2c} &\succeq \int_{0<|x|<\eta} \int_{C|x|^{a_{i+1}}< |y| < \epsilon |x|^{a_i}} |y|^{1-2c\beta}|x|^{1-2c(A_i + a_i(B_i-\beta))}d|y| d|x|\\
 &\succeq \int_{0<|x|<\eta} |x|^{1 +2a_i-2c(A_i + a_iB_i)} d|x|\\
\end{aligned}
\]
from which we see that the integral over $R_{i,i+1}$ is infinite if
\[
c \geq \delta_i = \frac{1+a_i}{A_i+a_iB_i}.
\]
Summarizing, we have proved that the integral~\eqref{eq: mainInt} diverges if
\[
c \geq \min_{i}\left\{ \frac{1}{\alpha}, \frac{1}{\beta}, \delta_i \right\} = \delta_{NP}.
\]
Next we show that this bound is sharp on the regions~\eqref{eq: hollowReg}.  So suppose $c<\delta_{NP}$.  Using the estimate \eqref{eq: est1Hol}
\[
\begin{aligned}
\int_{R_{i,i+1}} |f|^{-2c} &\preceq \int_{\{0<|x|<\eta\}} \int_{C|x|^{a_{i+1}}< |y| < \epsilon |x|^{a_i}} |y|^{1-2cB_i}|x|^{1-2c A_i }d|y| d|x|\\
 &\preceq \int_{\{0<|x|<\eta\}} |x|^{1+2a_i-2c(A_i + a_iB_i)} d|x|\\
\end{aligned}
\]
 provided $1>cB_i$.  If instead $1<cB_i$, then we get
 \[
\int_{R_{i,i+1}} |f|^{-2c} \preceq \int_{\{0<|x|<\eta\}} |x|^{1+2a_{i+1}-2c(A_i + a_{i+1}B_i)} d|x|\\
\]
Since $A_i+ a_{i+1}B_i = A_{i+1}+a_{i+1}B_{i+1}$, we see that the integral over $R_{i,i+1}$ is finite if $c< \min\{ \delta_i, \delta_{i+1}\} \leq \delta_{NP}$.  The regions $R_{0,1}, R_{k,\infty}$ are estimated to similarly and we conclude;
\begin{lem}
The integral of $|f|^{-2c}$ over the regions in \eqref{eq: hollowReg} converges if and only if
\[
c < \min_{i}\left\{ \frac{1}{\alpha}, \frac{1}{\beta}, \delta_i \right\} = \delta_{NP}.
\]
In particular, $c_0(f) \leq 2\delta_{NP}$ for any coordinate system.
\end{lem}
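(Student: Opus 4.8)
The plan is to reduce the finiteness of $\int_{\mathcal R}|f|^{-2c}$, where $\mathcal R$ denotes the union $R_{0,1}\cup\bigcup_{i=1}^{k-1}R_{i,i+1}\cup R_{k,\infty}$ of the hollow regions in \eqref{eq: hollowReg}, to a finite list of one-variable power integrals. The starting point is that on each hollow region one is bounded away from every Puiseux root $\phi_\nu$ of $P$, so each factor $|y-\phi_\nu(x)|$ is comparable (with constants depending only on the $b_\nu,b_{\mu\nu}$ and on an upper bound for $|x|$) either to $|y|$ --- this happens exactly when the leading exponent of $\phi_\nu$ exceeds the exponent $a_i$ defining the region --- or to $|\phi_\nu(x)|\asymp|x|^{a_j}$. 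The dichotomy is made quantitative by the two chains of inequalities displayed above, which use the definitions of $\epsilon$ and $D$ and are valid after shrinking $\eta$ so that the Puiseux error terms $O(x^{A_\nu})$ are negligible. Multiplying these comparisons over $\nu$ and recalling $f=x^\alpha y^\beta\prod_\nu(y-\phi_\nu)$ yields the two-sided monomial estimate \eqref{eq: est1Hol} on $R_{i,i+1}$, and the analogous estimates $|f|\asymp|x|^{A_0}|y|^{B_0}$ on $R_{0,1}$ and $|f|\asymp|x|^{A_k}|y|^{\beta}$ on $R_{k,\infty}$, with $A_i,B_i$ as in \eqref{eq:ABd def}.

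Next I would integrate the monomial bounds directly. On $R_{i,i+1}$, carrying out the $d|y|$-integral first over the annulus $\{C|x|^{a_{i+1}}<|y|<\epsilon|x|^{a_i}\}$, the $d|y|$-integral of $|y|^{1-2cB_i}$ is controlled by the \emph{outer} endpoint $|x|^{a_i}$ when $cB_i\le 1$ and by the \emph{inner} endpoint $|x|^{a_{i+1}}$ when $cB_i\ge 1$; in either case, using the collinearity relation $A_i+a_{i+1}B_i=A_{i+1}+a_{i+1}B_{i+1}$ (which just expresses that consecutive vertices of $NP(f)$ lie on the line supporting the face $F_{i+1}$), what remains is a single power of $|x|$ whose $d|x|$-integral over $\{0<|x|<\eta\}$ converges precisely when $c<\delta_i$, respectively $c<\delta_{i+1}$. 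Because the estimate \eqref{eq: est1Hol} is two-sided, these thresholds are sharp: running the same computation with the lower bound for $|f|$ forces divergence as soon as $c\ge\delta_i$. The endpoint regions are treated identically; there the extra factor $|x|^{\alpha}$ (resp.\ $|y|^{\beta}$) in $f$ produces the additional constraint $c\alpha<1$ from the $d|x|$-integral near $x=0$ in $R_{0,1}$ (resp.\ $c\beta<1$ from the $d|y|$-integral near $y=0$ in $R_{k,\infty}$), alongside $c<\delta_1$ and $c<\delta_k$. One small algebraic check shows these per-region constraints really do combine into ``$c<\delta_1$'' and ``$c<\delta_k$'': when $cB_0\le1$ and $c\alpha<1$ one has $c(A_0+a_1B_0)<1+a_1$ automatically, and symmetrically at the other corner.

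Then I would assemble the regions: $\int_{\mathcal R}|f|^{-2c}<+\infty$ if and only if all of the finitely many constraints hold at once, i.e.\ if and only if $c<\min_i\{1/\alpha,\,1/\beta,\,\delta_i\}$, which equals $\delta_{NP}$ by the computation identifying where the diagonal meets $\del NP(f)$. Finally, given any neighbourhood $U\ni 0$, shrinking $\eta$ (and, if necessary, $\epsilon$, which is harmless by Remark~\ref{rk: constants}) places $\mathcal R$ inside $U$; hence $\int_U|f|^{-2c}<+\infty$ implies $\int_{\mathcal R}|f|^{-2c}<+\infty$, so $c<\delta_{NP}$, and therefore $\tfrac12 c_0(f)=\sup\{c:\int_U|f|^{-2c}<+\infty\}\le\delta_{NP}$, i.e.\ $c_0(f)\le 2\delta_{NP}$. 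Nothing here uses the coordinate system, so the bound holds for every choice of coordinates.

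The step I expect to be the real work is the first one --- the sharp two-sided monomial estimate for $|f|$ on each hollow region --- together with the two arithmetic identities ($A_i+a_{i+1}B_i=A_{i+1}+a_{i+1}B_{i+1}$ and $\min_i\{1/\alpha,1/\beta,\delta_i\}=\delta_{NP}$) that make the inner- and outer-endpoint contributions of the annulus integrals both legible as $\delta_i$-type thresholds and guarantee sharpness. Once those are in hand the remaining integrations are routine one-variable calculus.
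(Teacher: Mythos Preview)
Your proposal is correct and follows essentially the same route as the paper: obtain sharp two-sided monomial bounds for $|f|$ on each hollow region by comparing $|y-\phi_\nu|$ to $|y|$ or $|x|^{a_j}$ according to whether $a_j>a_i$ or $a_j\le a_i$, then integrate in $|y|$ first (splitting on the sign of $1-cB_i$) and invoke the collinearity identity $A_i+a_{i+1}B_i=A_{i+1}+a_{i+1}B_{i+1}$ to read off the $\delta_i$-thresholds. One small wording issue: the endpoint constraints do \emph{not} collapse to just $c<\delta_1$ and $c<\delta_k$ --- the conditions $c\alpha<1$ and $c\beta<1$ are genuinely independent (and can be the binding ones when $\alpha$ or $\beta$ is large), which is why they survive into the final $\min\{1/\alpha,1/\beta,\delta_i\}$; your later formula is correct, but the sentence claiming they ``combine into $c<\delta_1$'' should be dropped.
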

\begin{rk}\label{rk: orderBd}
Note that if ${\rm ord}_0 f =N$ then we always have $c_0(f) \leq 2\delta_{NP} \leq 4N^{-1}$.  This bound can be derived in a more elementary way than this by converting to spherical coordinates; see \cite[Lemma 5.1]{PS}
\end{rk}

It remains to show that the integral~\eqref{eq: mainInt} is finite for $c < \delta_{NP}$.  This will be achieved by estimating the integral on the parts of $U$ not covered by the regions described in \eqref{eq: hollowReg}. For $1\leq i \leq k$ we write
\begin{equation}\label{eq: largeSolidHorn}
V_i = \{\epsilon |x|^{a_{i}}< |y| < D |x|^{a_i} \} \times \{0 \leq |x| \leq \eta \}.
\end{equation}
The $V_i$ together with the regions in~\eqref{eq: hollowReg} cover a neighbourhood of $0$.  The estimates on $V_i$ are somewhat more involved, owing to the fact that $V_i$ contains the roots in $S_i$.  As our estimates need to be essentially sharp, we must take care in how we decompose $V_i$, particularly with respect to isolating the roots of $f$.  The correct way to do this is by organizing the roots of $f$ according to their complexity, measured by the order of contact.  Let us focus only on the roots in $S_{i,\ell}$.

\begin{defn}
A cluster at level $r$ is a set $C_{r} \subset S_{i,\ell}$ such that, for all $\phi_\mu, \phi_\nu \in C_{r}$ we have
\[
\phi_\mu - \phi_\nu = b_{\mu\nu}x^{r_{\mu\nu}} + \cdots
\]
for $b_{\mu\nu}\ne 0$ and $r_{\mu\nu} \geq r$.  Furthermore, $C_{r}$ is maximal with this property in the sense that if $\phi_\mu \in C_{r}$, and $\phi_\nu \notin C_{r}$ then
\[
\phi_\mu - \phi_\nu = b_{\mu\nu}x^{r_{\mu\nu}} + \cdots
\]
for $b_{\mu\nu} \ne 0$ and $r_{\mu\nu} < r$.
\end{defn}
Note that if $\phi_\mu \in C_{r}$ and $\phi_\nu \notin C_{r}$, then the order of contact between $\phi_\mu$ and $\phi_\nu$ depends only on $C_r$ and not on the choice of $\phi_\mu$.
 
\begin{defn}
Let $C_r$ be a cluster at level $r$, and fix $\phi_{\mu}\in C_{r}$.  The maximal order of contact of $C_r$ is
\[
\gamma(C_r) := \max_{\nu} \{ r_{\mu\nu} :  \phi_\mu-\phi_\nu = b_{\mu\nu}x^{r_{\mu\nu}} + \cdots \quad  \text{ where } \quad \phi_\nu \notin C_{r} \}.
\]
Observe that $\gamma(C_r) <r$.  
\end{defn}
An important point is that each root $\phi_\nu \in S_{i}$ appears in a list of distinct clusters with decreasing order
\[
\{ \phi_\nu\} = C_{\infty}(\phi_{\nu}) \subsetneq C_{r_1} \subsetneq C_{r_2} \subsetneq\cdots \subsetneq S_{i,\ell} \subset S_i
\]
where $r_k = \gamma(C_{r_{k-1}})$.  Decomposing $V_i$ will involve the use of sets adapted to clusters; we call these sets ``horns".

\begin{defn}
A hollow horn about a cluster $C_r$ at level $r$ is
\[
\begin{aligned}
HH(C_r)&:= \{ |x|^{r} \preceq |y-C_r| \preceq |x|^r \}\\
&= \bigcap_{\phi_j \in C_r}\{ \epsilon |x|^r< |y-\phi_j| \leq D |x|^{r}\}.
\end{aligned}
\]
A solid horn about a cluster $C_r$ at level $r$ is
\[
\begin{aligned}
SH(C_r) &:= \{ |y-C_r| \preceq |x|^{\gamma(C_r)} \}\\
&= \bigcup_{\phi_j \in C_r}\{ |y-\phi_j| \leq \epsilon |x|^{\gamma(C_r)}\}.
\end{aligned}
\]
\end{defn}

\begin{prop}\label{prop: clusterint}
If $C_r \subset S_{i,\ell}\subset S_i$ is a cluster at level $r$, then
\[
\int_{SH(C_r)} |f|^{-2c} < +\infty
\]
provided
\begin{equation}\label{prop: cpropbnd}
c< \min\left\{m_{i\ell}^{-1}, \frac{1+a_i + (r_{m-1}-a_i)}{A_i +a_iB_i + m_{i\ell}(r_{m-1}-a_i)}, \delta_{NP} \right\}
\end{equation}
where $r_{m-1}$ is the maximal, non-infinite order of contact between roots in $S_{i,\ell}$ (see~\eqref{eq: ordersOfContact})
\end{prop}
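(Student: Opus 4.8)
The plan is to decompose $SH(C_r)$ into finitely many elementary regions, one attached to each cluster $\widehat{C}$ contained in $S_{i,\ell}$, estimate $|f|$ on each by an explicit monomial in $|x|$ and the $|y-\phi_j|$'s, and integrate directly. For a cluster $\widehat{C}$ with at least two distinct roots the region is its hollow horn $\{\,c_1|x|^{r(\widehat{C})}\le|y-\widehat{C}|\le c_2|x|^{\gamma(\widehat{C})}\,\}$, which sweeps the scales between $\gamma(\widehat{C})$ and the level $r(\widehat{C})$; for a repeated root $\phi_\nu$ (so $\widehat{C}=C_\infty(\phi_\nu)$ is a point of some multiplicity) it is the disk $\{\,|y-\phi_\nu|\le\epsilon|x|^{\gamma(C_\infty(\phi_\nu))}\,\}$. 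The first step is to verify that these regions cover $SH(C_r)$: given $y\in SH(C_r)$, take a nearest root $\phi_\nu\in C_r$, set $\rho=|y-\phi_\nu|\sim|x|^{s'}$, and let $\widehat{C}=\{\phi_\mu:r_{\mu\nu}>s'\}$; the ultrametric inequality $r_{\mu\mu'}\ge\min(r_{\mu\nu},r_{\mu'\nu})$ shows $\widehat{C}$ is a cluster inside $S_{i,\ell}$ with $r(\widehat{C})>s'\ge\gamma(\widehat{C})$ and $|y-\widehat{C}|\sim\rho$, so $y$ lies in the hollow horn of $\widehat{C}$ (or, once $\widehat{C}$ has shrunk to a single root-value, in that root's disk).

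On any such region $|y|\sim|x|^{a_i}$ (these regions sit inside $V_i$), and, by the elementary case analysis already used for the regions $R_{i,i+1}$, $|y-\phi_j|\sim\rho$ for every $\phi_j\in\widehat{C}$ while $|y-\phi_\mu|\sim|x|^{r_{\mu,\widehat{C}}}$ with $r_{\mu,\widehat{C}}\le\gamma(\widehat{C})$ for $\phi_\mu\in S_{i,\ell}\setminus\widehat{C}$; the factor $x^\alpha y^\beta$ together with the roots $\phi_\mu\notin S_{i,\ell}$ contributes the fixed $|x|$-exponent $\alpha+a_i\beta+\sum_{\phi_\mu\notin S_{i,\ell}}r_{\mu,\widehat{C}}=A_i+a_iB_i-m_{i\ell}a_i$. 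Passing to complex polar coordinates $w=y-\phi_j$ and bounding the $S_{i,\ell}\setminus\widehat{C}$ contribution by $(m_{i\ell}-|\widehat{C}|)\gamma(\widehat{C})$,
\[
\int_{\text{region}}|f|^{-2c}\,dV\ \preceq\ \int_0^{\eta}|x|^{1-2c\bigl(A_i+a_iB_i-m_{i\ell}a_i+(m_{i\ell}-|\widehat{C}|)\gamma(\widehat{C})\bigr)}\Bigl(\int_{c_1|x|^{r(\widehat{C})}}^{c_2|x|^{\gamma(\widehat{C})}}\rho^{1-2c|\widehat{C}|}\,d\rho\Bigr)\,d|x|,
\]
with the lower limit replaced by $0$ in the disk case (where $|\widehat{C}|$ is the multiplicity of $\phi_\nu$). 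Here the hypothesis $c<m_{i\ell}^{-1}\le|\widehat{C}|^{-1}$ does the essential work twice: it makes the inner integral converge at $0$, and it forces $1-2c|\widehat{C}|>-1$, so the inner integral is $\sim|x|^{\gamma(\widehat{C})(2-2c|\widehat{C}|)}$, i.e.\ dominated by its outer radius. Substituting and simplifying, the region's integral is finite provided
\[
c\ <\ \frac{1+\gamma(\widehat{C})}{(A_i+a_iB_i)+m_{i\ell}(\gamma(\widehat{C})-a_i)}\ =:\ g\big(\gamma(\widehat{C})\big),
\]
the dependence on $|\widehat{C}|$ having cancelled.

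It remains to minimize $g(\sigma)=\dfrac{1+\sigma}{(A_i+a_iB_i)+m_{i\ell}(\sigma-a_i)}$ over the scales $\sigma=\gamma(\widehat{C})$ that occur, all of which lie in $[a_i,r_{m-1}]$. Being a M\"obius function of $\sigma$ with pole $\sigma=a_i-(A_i+a_iB_i)/m_{i\ell}<a_i$, $g$ is monotone on $[a_i,r_{m-1}]$, so its minimum there is attained at an endpoint; since $g(a_i)=\frac{1+a_i}{A_i+a_iB_i}=\delta_i\ge\delta_{NP}$ and $g(r_{m-1})$ is exactly the middle term of \eqref{prop: cpropbnd}, every one of the finitely many regions has finite integral whenever $c$ satisfies \eqref{prop: cpropbnd}. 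This proves the proposition.

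The real obstacle is the combinatorial bookkeeping of the first two steps, not any single estimate: one must check that the hollow horns of the clusters in $S_{i,\ell}$, together with the innermost disks, genuinely exhaust $SH(C_r)$ — in particular that they cover the ``bridge'' scales lying between a cluster's level and a child's $\gamma$ — and that on each region $|f|$ is comparable to the claimed monomial uniformly in the scale, which is exactly where the particular choices of $\epsilon$, $D$, and of $\eta\ll1$ enter, as in the analysis of $R_{i,i+1}$. Once that structure is in place, the two uses of $c<m_{i\ell}^{-1}$ (convergence at a repeated root, and domination of each annular integral by its outer radius) together with the monotonicity of $g$ deliver precisely the bound \eqref{prop: cpropbnd}.
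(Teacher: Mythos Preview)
Your proof is correct and follows essentially the same approach as the paper: decompose $SH(C_r)$ into elementary regions attached to the clusters $\widehat{C}\subset C_r$, obtain on each the lower bound $|f|\succeq \rho^{\#\widehat{C}}|x|^{A_i+a_iB_i+m_{i\ell}(\gamma(\widehat{C})-a_i)-\#\widehat{C}\cdot\gamma(\widehat{C})}$, integrate using $c<m_{i\ell}^{-1}\le(\#\widehat{C})^{-1}$, and conclude via the monotonicity of $g(\sigma)=\frac{1+\sigma}{A_i+a_iB_i+m_{i\ell}(\sigma-a_i)}$, which is exactly the content of the paper's Lemma~\ref{lem: dumb}. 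The paper organizes the identical argument as a downward induction on the cluster level (base case $r=\infty$; inductive step splitting $SH(C_r)=E_1\sqcup E_2\sqcup E_3$ into sub-solid-horns, the hollow horn at scale $r$, and the bridge annuli up to $\gamma(C_r)$), whereas you unroll that induction into a single direct decomposition combining each $E_2\cup E_3$ into one annular region per cluster---but the regions, the estimates, and the governing lemma are the same.
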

\begin{proof}
Suppose $c$ satisfies the bound in~\eqref{prop: cpropbnd}.  The proof goes by induction on the level of the cluster,  beginning with clusters at level $\infty$.  By definition, a cluster at level $\infty$ is a set consisting of one of the distinct roots of $P$, counted with multiplicity.  We will estimate the integral over a solid horn about a cluster at level $\infty$.  To this end, fix $\phi_\nu \in S_{i,\ell}$, which we assume has maximal order of contact $r_\nu$.  A solid horn about the cluster $\{\phi_{\nu}\}$ consists of the region
\begin{equation}\label{eq:inftyhorn}
 \{ |y-\phi_{\nu}(x)| < \epsilon |x|^{r_{\nu}} \}.
\end{equation}
On this region we can estimate $|f|$ from below in the following way.  First, if $\phi_\mu \in S_j \cap S_{i,\ell}^c$, or if $\phi_{\mu} \equiv 0$, then
\begin{equation}\label{eq:estSj}
|y-\phi_\mu(x)| \geq |\phi_\mu-\phi_{\nu}|-|y-\phi_{\nu}| \geq\frac{\epsilon}{4} |x|^{a_j},
\end{equation}
where we used the definition of $\epsilon$.  On the other hand, if $\phi_\mu \in S_{i,\ell}$ but $\phi_{\mu} \ne \phi_{\nu}$ then we have the (possibly wasteful) estimate
\[
|y-\phi_\mu(x)| \geq |\phi_\mu-\phi_{\nu}|-|y-\phi_{\nu}| \geq\frac{\epsilon}{4} |x|^{r_\nu},
\]
again, using the definition of $\epsilon$.  Letting $d_{\nu}$ denote the multiplicity of $\phi_{\nu}$, or equivalently the cardinality of the cluster, we get the estimate
\[
|f| \geq |y-\phi_{\nu}|^{d_{\nu}}|x|^{A_i +a_iB_i + m_{i\ell}(r_{\nu}-a_i) -d_{\nu}r_{\nu}}.
\]
Thanks to the bound $c<m_{i\ell}^{-1}\leq d_{\nu}^{-1}$, integrating the estimate over the region in \eqref{eq:inftyhorn} we get 
\[
\int |f|^{-2c} \preceq \int_{0}^{\epsilon} |x|^{1+2r_{\nu} -2c(A_i +a_iB_i + m_{i\ell}(r_{\nu}-a_i))} d|x|
\]
and so we see that the integral converges if
\begin{equation}\label{eq:estmaxcont}
c < \frac{1+a_i + (r_{\nu}-a_i)}{A_i +a_iB_i + m_{i\ell}(r_{\nu}-a_i)}.
\end{equation}
We state the following trivial lemma, for convenience
\begin{lem}\label{lem: dumb}  
Let
\[
c(x) =\frac{1+a_i + x}{A_i +a_iB_i + m_{i\ell}x}.
\]
Then $c(x) \geq c(0)$ for $x>0$ if and only if $m_{i\ell} \leq c(0)^{-1}$.  If $m_{i\ell} > c(0)^{-1}$, then $c(x)$ is an increasing function of $x>0$.
\end{lem}

By the lemma, we see that the inequality in \eqref{eq:estmaxcont} is implied by the inequality in Proposition~\ref{prop: clusterint}, and hence establishes the proposition for clusters at level $\infty$.

Now, suppose we have proved Proposition~\ref{prop: clusterint} for solid horns about clusters at level $r \geq r_{s+1}$.  Let $C_{r_s}$ be a cluster at level $r_s$.  To ease notation, let us denote $r=r_s$.  Our goal is to estimate the integral over the solid horn $SH(C_{r})$.  The key point is that $C_{r}$ can be written as a disjoint union of clusters $C_{\hat{r}} \subsetneq C_{r}$ each having $\gamma(C_{\hat{r}}) = r$.  This allows us to decompose 
\[
SH(C_r) = E_1 \sqcup E_2\sqcup E_3
\]
where,
\[
\begin{aligned}
E_1 &= \bigcup_{C_{\hat{r}}\subsetneq C_r} SH(C_{\hat{r}})\\
%E_2 &= \bigcup_{C_{\hat{r}}\subsetneq C_r} HH(C_{\hat{r}})\\
E_2 &= HH(C_r)\\
E_3 &= \bigcup_{\phi_j \in C_{r}} \{ D |x|^{r} \leq |y-\phi_j| < \epsilon |x|^{\gamma(C_r)}\}.
\end{aligned}
\]
the first union being taking over the $C_{\hat{r}}$ described above.  Since we have already established the estimate on the solid horns $SH(C_{\hat{r}})$ for $\hat{r}>r$, it suffices to estimate the integral on $E_2$ and $E_3$. Let us consider the estimate on $E_2$.  First, if $\phi_\mu \in S_j \cap S_{i,\ell}^c$, or $\phi_{\mu} \equiv 0$, then \eqref{eq:estSj} still holds, so it suffices to estimate $|y-\phi_\mu|$ when $\phi_{\mu} \in S_{i,\ell}$.  There are then two cases, depending on whether $\phi_{\mu} \in C_r$ or $\phi_{\mu}\in S_{i,\ell}\cap C_r^c$.  If $\phi_{\mu} \in C_{r}$, then from the definition of $HH(C_r)$ we have
\[
|y-\phi_{\mu}| \succeq |x|^{r}.
\]
If instead $\phi_{\mu} \in S_{i,\ell} \cap C_{r}^{c}$ then for any $\phi_{j} \in C_{r}$ we have
\[
|y-\phi_{\mu}| \geq |\phi_{j} - \phi_{\mu}| - |\phi_{j}- y| \geq \epsilon|x|^{\gamma(C_r)} - D|x|^{r} \succeq |x|^{r},
\]
and so we obtain the estimate
\[
|f| \succeq |x|^{A_i+a_iB_i + m_{ik}(r-a_i)}.
\]
Integrating this estimate over the hollow horn $HH(C_r)$ yields the bound
\[
c < \frac{1+a_i + (r-a_i)}{A_i +a_iB_i + m_{i\ell}(r-a_i)}.
\]
Since $r\leq r_{m-1}$, Lemma~\ref{lem: dumb} shows that this bound is implied by the assumptions of Proposition~\ref{prop: clusterint}.  It remains to estimate the integral over the region $E_{3}$. On this region the estimate \eqref{eq:estSj} still holds, so it suffices to estimate $|y-\phi_\mu|$ when $\phi_{\mu} \in S_{i,\ell}$.  We consider separately each of the regions
 \[
 U_j:= \{ D |x|^{r} \leq |y-\phi_j| < \epsilon|x|^{\gamma(C_r)}\}.
 \]
For any  $\phi_{\mu} \in C_{r}$, different from $\phi_{j}$ we have the estimate
\[
|y-\phi_{\mu}| \geq |y-\phi_j| - |\phi_j - \phi_{\mu}| \geq \frac{1}{2}|y-\phi_j|
\]
using the definitions of $D, U_j$.  On the other hand, if $\phi_{\mu}\in S_{i,\ell}\cap C_{r}^c$ then we have the estimate
\[
|y-\phi_{\mu}| \geq |\phi_j - \phi_{\mu}| - |y-\phi_j| \geq \frac{\epsilon}{4}|x|^{\gamma(C_r)}.
\]
Combining these estimates gives
\begin{equation}\label{eq: genEst}
|f| \geq |y-\phi_{j}|^{\#C_{r}}|x|^{A_i +a_iB_i + m_{i,\ell}(\gamma(C_r)-a_i)- \#C_r \cdot \gamma(C_r)}.
\end{equation}
The reader may note the similarity between this estimate and the one obtained in the initial step of the induction (where $r=+\infty$).  Again, thanks to the bound $\#C_r \leq m_{i\ell} < c^{-1}$, integrating~\eqref{eq: genEst} over the region $U_{j}$ gives the bound
\[
c < \frac{1+a_i + (\gamma(C_r)-a_i)}{A_i +a_iB_i + m_{i\ell}(\gamma(C_r)-a_i)}.
\]
Appealing again to Lemma~\ref{lem: dumb} establishes Proposition~\ref{prop: clusterint}. 
\end{proof}

Let's assume now that $m_{i\ell}\leq \delta_{NP}^{-1}$ for all $i,\ell$ and finish the proof of Theorem~\ref{thm: NewtDist}.  In this case, Proposition~\ref{prop: clusterint}, combined with Lemma~\ref{lem: dumb} implies that the integral of $|f|^{-2c}$ over solid horns about clusters $C_r\subset S_{i,\ell}$ is finite provided $c<\delta_{NP}$.  Up to decreasing $\epsilon$ and increasing $D$ (see Remark~\ref{rk: constants}), it is easy to see that the region $V_i$, defined in~\eqref{eq: largeSolidHorn}, can be decomposed into a union of solid horns and hollow horns about the $S_{i,\ell}$.  Since the $S_{i,\ell}$ are themselves clusters, the estimates on solid horns about $S_{i,\ell}$ follow from Proposition~\ref{prop: clusterint}.  The estimates on the hollow horns about the $S_{i,\ell}$ are identical to those already obtained.  Alternatively, one can start from the very beginning, considering the $S_i$ themselves as clusters and running the same induction.  We have

\begin{prop}\label{prop: almostNewtDist}
In $\mathbb{C}^2$ (resp. $\mathbb{R}^2$), if $m_{i\ell} \leq \delta_{NP}^{-1}$ for all $i,\ell$, then we have
\[
c_0(f) = 2\delta_{NP} \qquad (\text{resp. } \delta_{NP} .)
\]
\end{prop}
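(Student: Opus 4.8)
The plan is to prove the two inequalities $c_0(f)\le 2\delta_{NP}$ and $c_0(f)\ge 2\delta_{NP}$ separately; in the real case one replaces $2\delta_{NP}$ by $\delta_{NP}$ and deletes the factors of $2$, the argument being otherwise identical. The upper bound is already in hand: by the Lemma closing the preceding discussion, the integral of $|f|^{-2c}$ over the hollow regions \eqref{eq: hollowReg} alone diverges once $c\ge\delta_{NP}$, so $c_0(f)\le 2\delta_{NP}$ in every coordinate system, a fortiori in one with $m_{i\ell}\le\delta_{NP}^{-1}$. Thus the content of the proposition is the reverse inequality, which amounts to showing $\int_U|f|^{-2c}<+\infty$ for every $c<\delta_{NP}$, with $U$ a small enough neighbourhood of $0$.

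Fix such a $c$. First I would cover $U$, up to a null set, by the hollow regions $R_{0,1},R_{i,i+1}\ (1\le i\le k-1),R_{k,\infty}$ of \eqref{eq: hollowReg} and the solid regions $V_1,\dots,V_k$ of \eqref{eq: largeSolidHorn}, which were noted above to cover a punctured neighbourhood of $0$. The integral over each hollow region is finite for $c<\delta_{NP}$ by the Lemma, so it remains to bound $\int_{V_i}|f|^{-2c}$. After shrinking $\epsilon$ and enlarging $D$ (allowed by Remark~\ref{rk: constants}), I would split $V_i$ into finitely many solid horns $SH(S_{i,\ell})$ about the clusters $S_{i,\ell}\subset S_i$ of roots with a common leading coefficient, together with one hollow horn $HH(S_i)$ gathering the points of $V_i$ at distance comparable to $|x|^{a_i}$ from every root of $S_i$ --- this is the top-level analogue of the decomposition $SH(C_r)=E_1\sqcup E_2\sqcup E_3$ from the proof of Proposition~\ref{prop: clusterint}. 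The hollow-horn contribution is bounded by the very computation already carried out for the regions \eqref{eq: hollowReg}, hence is finite for $c<\delta_{NP}$; and each $S_{i,\ell}$ is genuinely a cluster (at an appropriate level $>a_i$, with maximal order of contact $a_i$), so Proposition~\ref{prop: clusterint} governs $SH(S_{i,\ell})$ and, with it, every solid horn $SH(C_r)$ over the sub-clusters $C_r\subset S_{i,\ell}$ into which it further decomposes.

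The heart of the matter is to check that, under the standing hypothesis $m_{i\ell}\le\delta_{NP}^{-1}$, the constraint \eqref{prop: cpropbnd} of Proposition~\ref{prop: clusterint} is implied by the single inequality $c<\delta_{NP}$, so that nothing sharper than $c<\delta_{NP}$ is ever demanded. Since $m_{i\ell}\le\delta_{NP}^{-1}$ is the same as $m_{i\ell}^{-1}\ge\delta_{NP}$, the condition $c<\delta_{NP}$ forces $c<m_{i\ell}^{-1}$. For the middle quantity in \eqref{prop: cpropbnd} I would invoke Lemma~\ref{lem: dumb} for $c(x)=\frac{1+a_i+x}{A_i+a_iB_i+m_{i\ell}x}$, for which $c(0)=\delta_i$: whether $m_{i\ell}\le\delta_i^{-1}$ (so $c(x)\ge c(0)$ for $x>0$) or $m_{i\ell}>\delta_i^{-1}$ (so $c$ is increasing for $x>0$, which forces $\delta_i>\delta_{NP}$ in view of the hypothesis), in either case $c(r_{m-1}-a_i)\ge c(0)=\delta_i\ge\delta_{NP}$, using $\delta_{NP}\le\min_i\delta_i$. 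Hence the minimum defining \eqref{prop: cpropbnd} equals $\delta_{NP}$, i.e.\ \eqref{prop: cpropbnd} is exactly $c<\delta_{NP}$, and Proposition~\ref{prop: clusterint} returns $\int_{SH(C_r)}|f|^{-2c}<+\infty$ for all clusters $C_r\subset S_{i,\ell}$ and all $c<\delta_{NP}$ --- precisely what the decomposition of $V_i$ asks for.

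Assembling the pieces, every member of the finite cover of $U$ contributes a finite integral for $c<\delta_{NP}$, so $c_0(f)\ge 2\delta_{NP}$; with the upper bound this yields $c_0(f)=2\delta_{NP}$ (resp.\ $\delta_{NP}$). I expect the only genuinely fussy step to be the combinatorics of the last decomposition: one must verify that, with the adjusted constants, $V_i$ is honestly a finite disjoint union of bona fide solid and hollow horns about the clusters inside $S_i$, so that Proposition~\ref{prop: clusterint} and the hollow-region estimates can be quoted without change. (One could instead sidestep the $V_i$ entirely, as remarked before the statement, by treating each $S_i$ as the outermost cluster and re-running the induction of Proposition~\ref{prop: clusterint} from there; the inductive step is verbatim the same.) Everything else is a mechanical consequence of Proposition~\ref{prop: clusterint}, Lemma~\ref{lem: dumb}, and the trivial inequality $\delta_{NP}\le\min_i\delta_i$.
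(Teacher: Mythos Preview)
Your proposal is correct and follows exactly the route the paper takes: the paragraph immediately preceding the proposition is the paper's entire argument, and you have simply fleshed it out---decompose $V_i$ into solid horns $SH(S_{i,\ell})$ and a hollow horn, invoke Proposition~\ref{prop: clusterint} on the former, and use Lemma~\ref{lem: dumb} together with $m_{i\ell}\le\delta_{NP}^{-1}$ to collapse the bound \eqref{prop: cpropbnd} to $c<\delta_{NP}$. One minor wrinkle: in your case $m_{i\ell}>\delta_i^{-1}$ the function $c(x)$ is actually \emph{decreasing} (the Lemma's wording is unfortunate), so the correct inequality there is $c(r_{m-1}-a_i)>\lim_{x\to\infty}c(x)=m_{i\ell}^{-1}\ge\delta_{NP}$ rather than $c(r_{m-1}-a_i)\ge c(0)$; your conclusion is unaffected.
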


Theorem~\ref{thm: NewtDist} now follows from Proposition~\ref{prop: COV} below.  Before embarking on the proof, the reader may consider the examples mentioned in Remark~\ref{rk: NPinvar}.  In that case the function $f = (y-x)^{N}$ does not satisfy the assumptions of Proposition~\ref{prop: almostNewtDist}.  However, $f$ can be transformed to the function $g = y^{N}$, to which Proposition~\ref{prop: almostNewtDist} does apply,  by a holomorphic change of variables.

\begin{prop}[Phong-Stein-Sturm]\label{prop: COV}
In the above notation, we have
\[
m_{i\ell} \leq \delta_{NP}^{-1}
\]
unless $m_{i\ell}$ corresponds to the main face of $\del NP(f)$.  If $m_{i\ell} > \delta_{NP}^{-1}$ then there exists a holomorphic function $Q(\cdot)$ so that after making the change of coordinates
\[
(\tilde{x}, \tilde{y}) = (x-Q(y), y) ,\quad \text { or } (\tilde{x}, \tilde{y}) = (x, y-Q(x))
\]
the critical integrability index $c_0(f)$ is computed by the Newton distance of $f(\tilde{x}, \tilde{y})$.  
\end{prop}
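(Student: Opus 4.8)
The plan is to analyze the situation when some $m_{i\ell} > \delta_{NP}^{-1}$, show this forces a very rigid configuration, and then produce the coordinate change explicitly. First I would observe that $m_{i\ell} \leq m_i$, and the length of the face $F_i$ projected to the $q$-axis is exactly $m_i$; a short computation with \eqref{eq:ABd def} shows that $\delta_i^{-1} = \frac{A_i + a_i B_i}{1+a_i}$ lies in $[A_i, A_{i-1}]$ and that the diagonal meets $\del NP(f)$ on the prolongation of $F_i$ precisely when $\delta_i$ is the minimum over all $j$ of the $\delta_j$ (together with $1/\alpha, 1/\beta$). So $m_{i\ell} > \delta_{NP}^{-1}$ forces $B_i < m_{i\ell} \leq m_i$, which (since $B_i = \sum_{j>i} m_j + \beta$) is a strong constraint: it says the ``mass'' of roots with leading exponent $a_i$ (in fact with leading \emph{coefficient} $b_{i\ell}$) exceeds the total mass of everything lying above the face $F_i$ plus $\beta$. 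A parallel computation on the $x$-side (using $A_i$) shows it also forces the face $F_i$ to be the main face. I would package this as: if $m_{i\ell} > \delta_{NP}^{-1}$ then $F_i$ is the main face, and moreover $m_{i\ell}$ roots share not just a leading exponent but a leading coefficient — i.e. there is a dominant cluster $S_{i,\ell}$ that is ``too fat'' for the Newton polyhedron to see the true singularity.

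The second step is the coordinate change. Writing the $m_{i\ell}$ roots in $S_{i,\ell}$ as $\phi_\nu(x) = b_{i\ell}x^{a_i} + (\text{higher order})$, the natural move is to subtract off the common part: if $a_i = p/q$ in lowest terms and $p \geq q$ (the case where the dominant roots live ``over'' the $x$-axis), then $b_{i\ell}x^{a_i}$ is not a power series, but one can subtract the truncation $Q(x)$ of the common initial segment of the $\phi_\nu$ up to (but not including) the first exponent $r_1$ at which they split — this $Q$ is a genuine holomorphic function of $x$ precisely because, by the rigidity from Step 1, that common segment has only integer exponents (it must, or else blowing up would reveal extra roots beyond $F_i$, contradicting $B_i < m_{i\ell}$). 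Replacing $y$ by $\tilde y = y - Q(x)$ (or symmetrically $\tilde x = x - Q(y)$ when $a_i \leq 1$ and the roots cluster along the $y$-axis after inverting) shifts all of $S_{i,\ell}$ to have strictly larger leading exponent, hence moves the corresponding vertex of the Newton polyhedron and \emph{strictly decreases} $\delta_{NP}$. One then checks that $c_0(f)$ is unchanged (it is a biholomorphic invariant) and re-runs the argument: either the new Newton distance now computes $c_0(f)$ via Proposition~\ref{prop: almostNewtDist}, or there is still a fat cluster and we repeat. Since each step strictly lowers $\delta_{NP}$ and $\delta_{NP}$ is bounded below by $\tfrac12 c_0(f) > 0$, while the combinatorial data (degree $N$ of the Weierstrass polynomial, number of distinct roots) is non-increasing, the process terminates; composing the finitely many changes of variables — each of the allowed form $(x - Q(y), y)$ or $(x, y - Q(x))$, and the composition can be arranged to again be of this form by absorbing into $Q$ — gives the desired single coordinate change.

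I would structure the write-up as two lemmas: (a) the combinatorial rigidity lemma (\emph{$m_{i\ell} > \delta_{NP}^{-1}$ implies $F_i$ is the main face and the common initial segment of $S_{i,\ell}$ has integer exponents only}), proved by the vertex computation and a blow-up/Puiseux argument; and (b) the descent lemma (the explicit $Q$, the strict decrease of $\delta_{NP}$, invariance of $c_0(f)$, and termination). The main obstacle, I expect, is Step~(a): one has to rule out the possibility that the ``fat cluster'' hides behind a non-integer common segment, for which the clean argument is to pass to the branch $x = t^q$, observe that in the $t$-coordinate the roots become honest power series, track how $A_i, B_i, \delta_{NP}$ transform, and show that a non-integer split exponent would produce an extra lattice point contradicting the assumed inequality $B_i < m_{i\ell}$ — keeping careful track of the $q$-fold covering bookkeeping is the delicate part. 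A secondary subtlety is checking that the finite composition of coordinate changes can be renormalized to the single triangular form in the statement; this is essentially formal but worth a sentence.
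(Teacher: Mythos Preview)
Your overall strategy matches the paper's --- isolate the main face, force the leading exponent to be integral, subtract a holomorphic $Q$. The substantive difference is the integrality step. You propose passing to the branched cover $x=t^{q}$ and bookkeeping lattice points, which you rightly flag as delicate. The paper replaces this with a one-line Galois-conjugate observation: if $a_{i}=p_{\nu}/q_{\nu}$ in lowest terms and $\phi_{\nu}=b\,x^{a_{i}}+\cdots$ is a Puiseux root, then so is $\zeta b\,x^{a_{i}}+\cdots$ for every $q_{\nu}$-th root of unity $\zeta$; hence $m_{i\ell}\le m_{i}/q_{\nu}$, and since the $p$- and $q$-lengths of $F_{i}$ are in ratio $p_{\nu}:q_{\nu}$ one also gets $m_{i\ell}\le(\text{$p$-length of }F_{i})/p_{\nu}$. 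Feeding this into the line equation of the main face yields $\delta_{NP}^{-1}\ge m_{i\ell}\,p_{\nu}q_{\nu}/(p_{\nu}+q_{\nu})$, so $m_{i\ell}>\delta_{NP}^{-1}$ forces $p_{\nu}+q_{\nu}>p_{\nu}q_{\nu}$, i.e.\ $p_{\nu}=1$ or $q_{\nu}=1$. The same conjugate bound is what makes the ``$F_{i}$ left of the main face'' case work; your ``parallel computation on the $x$-side'' only gives $m_{i}a_{i}\le\delta_{NP}^{-1}$, which does not control $m_{i\ell}$ when $a_{i}<1$ without the extra factor $q_{\nu}$.

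For the change of variables the paper takes $Q$ in one shot, as the power series of maximal length still shared by more than the \emph{original} $\delta_{NP}^{-1}$ roots; by maximality no cluster in the new coordinates exceeds $\delta_{NP}^{-1}$, and since $\tilde\delta_{NP}^{-1}\ge\delta_{NP}^{-1}$ one is finished without iterating. Your inductive version is legitimate (the paper even mentions it as an equivalent route), but ``$\delta_{NP}$ strictly decreasing and bounded below'' is not by itself a termination argument --- a strictly decreasing bounded sequence need not stabilize. You would need a discrete invariant (e.g.\ the cardinality of the fat cluster strictly drops at each step), or simply note that the one-shot $Q$ is precisely the composite of your inductive substitutions.
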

\begin{proof}
We sketch a proof, following Phong-Stein-Sturm \cite[Theorem 5]{PSS}.  Let $\pi_{p}, \pi_{q}$ be the projections to the $p$ and $q$-axes respectively.  For every face $F_i$ of $NP(f)$ let
\[
A_i = \text{ Length of } \pi_{q}(F_i), \qquad B_i = \text{ Length of } \pi_{p}(F_i)
\]
Recall that $A_i$ is the cardinality of the set of roots with leading order determined by the slope of $F_i$, so if $F_i$ is right of the main face, then $m_{i\ell} \leq A_i \leq \delta_{NP}$.  Now suppose we have a solution
\[
\phi_{\nu} = bx^{p_{\nu}/q_{\nu}} +\cdots
\]
with $p_\nu, q_\nu$ relatively prime, and corresponding to a face $F_i$.  If $\zeta$ is a $q_{\nu}$-th root of unity then it is easy to see that
\[
\tilde{\phi}_{\nu} = \zeta bx^{p_{\nu}/q_{\nu}} +\cdots
\]
is also a root of $f$, and therefore $m_{i\ell} \leq A_i/q_{\nu}$. Since $A_i/B_i =q_\nu/p_\nu$, we can further conclude
\[
A_i \geq q_{\nu}m_{i\ell} \qquad B_{i} \geq p_{\nu}m_{i\ell}.
\]
Now if $F_i$ is left of the main face, then we fhave $B_i \leq \delta_{NP}$, and so $m_{i\ell}\leq \delta_{NP}$.  If the main face is a vertex we're done, so we can assume that is not the case.   Let $F_i$ denote the main face.  $F_i$ is cutout by the line
\[
L(p,q) := \frac{qp_{\nu} + q_{\nu}p}{p_{\nu}+ q_{\nu}}- \delta_{NP}^{-1} = 0,
\]
and the point $(0, A_i)$ lies in the region $\{ L \leq 0\}$.  Writing this out gives
\[
\delta_{NP}^{-1} \geq \frac{A_ip_{\nu}}{p_{\nu}+ q_{\nu}} \geq m_{i\ell}\frac{p_{\nu}q_{\nu}}{p_{\nu}+ q_{\nu}}.
\]
If $m_{i\ell} > \delta_{NP}^{-1}$, then $p_{\nu} + q_{\nu} > p_{\nu}q_{\nu}$ so either $p_{\nu}$ or $q_{\nu}$ must be $1$.  It remains only to construct the change of variables.  Let us assume $q_{\nu}=1$ for simplicity, otherwise we write the roots as functions of $y$ and the argument is the same.  By assumption we have $m_{i\ell} > \delta_{NP}^{-1}$ roots of the form
\[
\phi_{\nu} = bx^{p_{\nu}} +\cdots.
\]
We take $Q(x) = bx^{p_{\nu}} +\cdots$ to be the powerseries, or polynomial of maximal degree so that there are more than $\delta_{NP}^{-1}$ roots $\phi_{\nu}$ with
\[
\phi_{\nu} = Q(x) + \text{ higher order terms}.
\]
It is then not hard to check that in the new coordinates $(\tilde{x}, \tilde{y}) = (x, y-Q(x))$ we have $\tilde{\delta}_{NP} < \delta_{NP}$ and $m_{i\ell} \leq \tilde{\delta}_{NP}^{-1}$ for every $i,\ell$.  An equivalent approach is to construct $Q(x)$ inductively, beginning with the coordinate transformation $\phi(x,y) = (x,y-bx^{p_{\nu}})$, and examining the change in the Newton polygon.  We leave the details to the reader.
\end{proof}

\section{Proof of Theorem~\ref{thm: main}}\label{sec:mainthm}

Let $\{f_n\}$ be a sequence of holomorphic (or real analytic) functions with $c_0(f_n)$ converging to $c_{\infty} > 0$.  By Remark~\ref{rk: orderBd}, up to passing to a subsequence, we can assume that the ${\rm ord}_{0}f_n = N$ for all $n$.  By Remark~\ref{rk: genericCoords}, after making a generic linear change of coordinates we can assume that each $f_n$ has ${\rm ord}_0f_{n}(x,0) = {\rm ord}_0f_n(0,y) = N$, and that $f_n$ can be written simultaneously as a Weierstrass polynomial of order $N$ in $x$ and $y$.  By Theorem~\ref{thm: NewtDist}, for each $n\in \mathbb{N}$, we can make a change of coordinates of the type described in Proposition~\ref{prop: COV} so that $c_0(f_n)$ is computed by the Newton distance $\delta_{NP}(f_n)$.  Suppose that the $n$-th change of variables is of the form
 \[
 (\tilde{x},\tilde{y}) = (x, y-Q_n(x))
 \]
 Then we can write
 \begin{equation}\label{eq:WPrep3}
 f_{n}(\tilde{x},\tilde{y}) = (unit)\cdot \left( \tilde{y}^N + \sum_{i=0}^{N-1}\tilde{b}_{\ell}(\tilde{x})\tilde{y}^\ell\right)
 \end{equation}
 with $\tilde{b}_{\ell}(\tilde{x})$ holomorphic (resp. real analytic) and vanishing at the origin.  The same thing holds in the case that $ (\tilde{x},\tilde{y}) = (x-Q_n(y), y)$, using the representation of $f_n$ as a Weierstrass polynomial of degree $N$ in $x$.  By renaming the variables we can always assume that there are coordinates where $\delta_{NP}(f_n)$ computes $c_0(f_n)$ and in these coordinates $f_n$ has the form~\eqref{eq:WPrep3}.  In particular, each Newton polygon $NP(f_n)$ has a vertex at the point $(0,N)$, and so contains the set $(0,N) + \mathbb{R}^2_{\geq 0}$.  Every vertex of $\del NP(f_n)$ lying to the left of the diagonal must be an element of the set 
\[
\mathcal{L}:= \{(p,q)\in \mathbb{N}^{2} : q\geq p, \quad q\leq N\}.
\]
Since $\mathcal{L}$ is finite, by passing to a subsequence, we can assume that $NP(f_n) \cap \mathcal{L}$ is independent of $n$.  Let $(p^{*},q^{*})$ be the vertex of $\del NP(f_n)$ lying in $\mathcal{L}$ and with $p^{*}$ maximal-- that is, $(p^{*},q^{*})$ is the first vertex of $\del NP(f_n)$ lying on, or left of, the diagonal. There are then two cases (we assume the complex case now, to fix constants).

If $p^{*} =q^{*}$, then $c_0(f_n) =2\delta_{NP}(f_n) = 2/p^{*}$ and we're finished, so we can assume $p^{*}< q^{*}$.  For simplicity, we now consider the increasing and decreasing cases separately, though the principle is the same.

{\bf Increasing case:}  Let $(p_n,q_n)$ be the left most vertex of $NP(f_n)$ lying in 
\[
\mathcal{R}=\{(p,q)\in \mathbb{N}^{2} : q< p\}.
\]
That is $(p_n, q_n)$ is the first vertex lying strictly to the right of the diagonal.  Since $p^{*}<q^{*}$, we have that $2c_0(f_n)^{-1} = \delta_{NP}(f_n)^{-1}$ lies in the interior of the line connecting $(p^{*}, q^{*})$ to $(p_n, q_n)$.  Moreover, thanks to the fact that $c_0(f_n)$ is increasing, we know that $(p_n,q_n)$ must lie in the region of $\mathcal{R}$ lying below the line connecting $(p^{*}, q^{*})$ to $(p_1,q_1)$.  But this region is compact and hence there are only finitely many possible choice of $(p_n, q_n)$. In particular, $\delta_{NP}(f_n)$ must stabilize.

{\bf Decreasing case:}  Essentially the same argument works in the decreasing case.  Assume that 
\[
c_0(f_n) \searrow c_{\infty}.
\]
If $c_{\infty} = \frac{2}{q^{*}}$ then we're done, since $2/q^{*} \in \mathcal{C}(1)$, so we can assume that $c_{\infty} >2/q^{*}$.  We now consider the line $L$ through $(p^{*}, q^{*})$ and $(c_{\infty}^{-1}, c_{\infty}^{-1})$.  Since $2c_{\infty}^{-1} < q^{*}$, $L$ must intersect the $p$-axis at some point.  Since $c_0(f_n)$ is decreasing, the points $(p_n, q_n)$ lie in $\mathcal{R}$, but below $L$.  This set is bounded, and hence contains only finitely many lattice points.  It follows that $c_0(f_n)$ must stabilize.  It remains only to demonstrate that points in $\mathcal{C}(1)$ are actually accumulation points which can be deduced by applying Proposition~\ref{prop: almostNewtDist} to the functions $f_{n} = y^{n}-x^{m}$ and letting $n\rightarrow \infty$.

\end{document}